\documentclass[12pt]{amsart}
\usepackage{amssymb}
\usepackage{amsthm}
\usepackage{amsmath}
\usepackage{enumerate}
\usepackage[colorlinks]{hyperref}
\usepackage{mathrsfs}
\usepackage{booktabs}
\usepackage{marginnote}

\headheight=7pt \textheight=574pt \textwidth=432pt \topmargin=14pt
\oddsidemargin=18pt \evensidemargin=18pt

\pagenumbering{arabic}
\theoremstyle{plain}

\newtheorem{theorem}{Theorem}
\newtheorem{lemma}[theorem]{Lemma}
\newtheorem{proposition}[theorem]{Proposition}

\newtheorem{cor}[theorem]{Corollary}

\newtheorem{defin}[theorem]{Definition}
\newtheorem{prop}[theorem]{Proposition}

\newcommand{\NN}{\mathbb N}

\newcommand{\RR}{\mathbb R}

\newcommand{\irr}{\textup{Irr}}

\newcommand{\ra}{\rightarrow}

\newcommand{\s}{\scriptstyle}

\renewcommand{\s}{\mathscr}
\newcommand{\la}{\lambda}
\newcommand{\G}{\Gamma}

\newcommand{\Al}{\textup{\textsf{A}}}
\newcommand{\Sy}{\textup{\textsf{S}}}

\newcommand{\twolineindex}[2]{
  \begin{array}{c}
    \\[-15pt]{\scriptstyle #1}
    \\[-3pt]{\scriptstyle #2}
  \end{array}
}

\begin{document}
\title[Character degrees of the symmetric groups]
{The largest character degrees of the symmetric and alternating
groups}

\thanks{The research of the first author leading to these results has
  received funding from the European Union's Seventh Framework
  Programme (FP7/2007-2013) under grant agreement no. 318202, from ERC
  Limits of discrete structures Grant No.\ 617747 and from OTKA
  K84233. The third author is partially supported by NSA Young
  Investigator Grant \#H98230-14-1-0293 and a BCAS Faculty Scholarship
  Award from the Buchtel College of Arts and Sciences-The University
  of Akron}

\author[Z. Halasi]{Zolt\'an Halasi} \address{Department of Algebra
and Number Theory, Institute of Mathematics, University of Debrecen,
4010, Debrecen, Pf.~12, Hungary} \email{halasi.zoltan@renyi.mta.hu}

\author[C. Hannusch]{Carolin Hannusch} \address{Department of Algebra
and Number Theory, Institute of Mathematics, University of Debrecen,
4010, Debrecen, Pf.~12, Hungary}
\email{carolin.hannusch@science.unideb.hu}

\author[H.\,N. Nguyen]{Hung Ngoc Nguyen}
\address{Department of Mathematics, The University of Akron, Akron,
Ohio 44325, USA} \email{hungnguyen@uakron.edu}

\subjclass[2010]{Primary 20C30, 20C15}

\keywords{Symmetric groups, alternating groups, character degrees,
largest character}

\date{\today}

\begin{abstract} We show that the largest character degree of an
alternating group $\Al_n$ with $n\geq 5$ can be bounded in terms of
smaller degrees in the sense that
\[
b(\Al_n)^2<\hspace{-10pt}\sum_{\twolineindex{\psi\in\irr(\Al_n)}{\psi(1)<
b(\Al_n)}}\hspace{-10pt}\psi(1)^2,
\]
where $\irr(\Al_n)$ and $b(\Al_n)$
respectively denote the set of irreducible complex characters of
$\Al_n$ and the largest degree of a character in $\irr(\Al_n)$. This
confirms a prediction of I.\,M.~Isaacs for the alternating groups
and answers a question of M.~Larsen, G.~Malle, and P.\,H.~Tiep.
\end{abstract}

\maketitle


\section{Introduction}

For a finite group $G$, let $\irr(G)$ and $b(G)$ respectively denote
the set of irreducible complex characters of $G$ and the largest
degree of a character in $\irr(G)$, then set
\[\varepsilon(G):=\frac{\sum_{\chi\in\irr(G),\,\chi(1)<b(G)}\chi(1)^2}{b(G)^2}.\]
Since $b(G)$ divides $|G|$ and $b(G)^2\leq |G|$, one can write
$|G|=b(G)(b(G)+e)$ for some non-negative integer $e$. The
(near-)extremal situations where $b(G)$ is very close to
$\sqrt{|G|}$, or equivalently $e$ is very small, have been studied
considerably in the literature, see~\cite{Berkovich,Snyder}.
According to the result of Y.~Berkovich~\cite{Berkovich} which says
that $e=1$ if and only if $G$ is either an order 2 group or a
$2$-transitive Frobenius group, there is no upper bound for $|G|$ in
this case. On the other hand, when $e>1$, N.~Snyder~\cite{Snyder}
showed that $|G|$ is bounded in terms of $e$ and indeed $|G|\leq
((2e)!)^2$.

In an attempt to replace Snyder's factorial bound with a polynomial
bound of the form $Be^6$ for some constant $B$, Isaacs \cite{Isaacs}
raised the question whether the largest character degree of a
non-abelian simple group can be bounded in terms of smaller degrees
in the sense that $\varepsilon(S)\geq \varepsilon$ for some
universal constant $\varepsilon>0$ and for all non-abelian simple
groups $S$. Answering Isaacs's question in the affirmative, Larsen,
Malle, and Tiep~\cite{Larsen-Malle-Tiep} showed that the bounding
constant $\varepsilon$ can be taken to be $2/(120\,000!)$. We note
that this rather small bound comes from the alternating groups,
see~\cite[Theorem~2.1 and Corollary~2.2]{Larsen-Malle-Tiep} for more
details.

To further improve Snyder's bound from $Be^6$ to $e^6+e^4$, Isaacs
even predicted that $\varepsilon(S)> 1$ for every non-abelian simple
group $S$. This was in fact confirmed in~\cite{Larsen-Malle-Tiep}
for the majority of simple classical groups, and for all simple
exceptional groups of Lie type as well as sporadic simple groups.
Therefore, Larsen, Malle and Tiep questioned whether one can improve
the bound $2/(120\,000!)$ for the remaining non-abelian simple
groups -- the alternating groups $\Al_n$ of degree at least 5.
Though Snyder's bound has been improved significantly by different
methods in recent works of C.~Durfee and S.~Jensen
\cite{Durfee-Jensen} and M.\,L.~Lewis~\cite{Lewis}, Isaacs's
prediction and in particular Larsen-Malle-Tiep's question are still
open.

In this paper we are able to show that $\varepsilon(\Al_n)>1$ for
every $n\geq 5$.

\begin{theorem}\label{theorem-main-1}
  For every integer $n\geq 5$,
  \[
  \sum_{\twolineindex{\psi\in\irr(\Al_n)}{\psi(1)< b(\Al_n)}}\hspace{-10pt}
  \psi(1)^2>b(\Al_n)^2.
  \]
\end{theorem}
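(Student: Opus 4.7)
The plan is to recast the question combinatorially in terms of degrees $f^\lambda$ of the $\Sy_n$-irreducibles indexed by partitions $\lambda \vdash n$, and then exhibit a short list of near-extremal partitions whose contributions to the $\Al_n$-character degree sum already exceed $b(\Al_n)^2$. By Clifford theory, each non-self-conjugate pair $\{\lambda,\lambda'\}$ yields a single $\Al_n$-irrep of degree $f^\lambda$, while each self-conjugate partition $\lambda$ yields two $\Al_n$-irreps of degree $f^\lambda/2$. As a first step, I would locate the partition(s) $\lambda^*$ realizing $b(\Sy_n)$, decide whether $b(\Al_n)=b(\Sy_n)$ (non-self-conjugate maximizer) or $b(\Al_n)=b(\Sy_n)/2$ (only self-conjugate maximizers), and observe that the target inequality is equivalent to $(m+1)\,b(\Al_n)^2<n!/2$, where $m$ is the multiplicity of $b(\Al_n)$ in $\irr(\Al_n)$.

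The core of the proof would then be a construction producing several partitions $\mu \ne \lambda^*$ whose associated $\Al_n$-characters have degree strictly smaller than $b(\Al_n)$ but whose squared degrees already sum to more than $b(\Al_n)^2$. The main tool is the hook length formula
\[
f^\lambda = \frac{n!}{\prod_{c \in \lambda} h(c)},
\]
where $h(c)$ is the hook length of the cell $c$ in $\lambda$. If $\mu$ is obtained from $\lambda^*$ by moving a single box from a removable corner to an addable corner, then $f^\mu/f^{\lambda^*}$ telescopes to an explicit product of hook-length ratios supported on one row and one column, which one can bound from below by an explicit function of $n$. A useful observation is that a single self-conjugate partition $\lambda$ with $b(\Al_n)<f^\lambda\le \sqrt{2}\,b(\Al_n)$ would suffice on its own, since it contributes $2(f^\lambda/2)^2=(f^\lambda)^2/2>b(\Al_n)^2$ to the sum.

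I anticipate the main obstacle to be producing enough near-extremal partitions uniformly in $n$: the ratios $f^\mu/f^{\lambda^*}$ depend sensitively on the local hook structure of $\lambda^*$, and the case distinction based on whether the maximizer is self-conjugate must be handled carefully, because in that situation $b(\Al_n)=b(\Sy_n)/2$ and the list of admissible neighbors changes. A secondary issue is to ensure that the chosen neighbors yield genuinely distinct $\Al_n$-characters (after identifying $\lambda$ with $\lambda'$) so that the sum is not overcounted. Finally, for small values of $n$ (an explicit range such as $n \le 20$) the hook-length estimates may be too weak, and these cases should be verified directly from the character tables of $\Al_n$.
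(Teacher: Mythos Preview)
Your plan contains a structural error that would derail the argument: the dichotomy ``$b(\Al_n)=b(\Sy_n)$ or $b(\Al_n)=b(\Sy_n)/2$'' is false. When the $\Sy_n$-maximizer $\lambda^*$ is self-conjugate, $b(\Al_n)$ is the maximum of $b(\Sy_n)/2$ and the largest $f^\mu$ over non-self-conjugate $\mu$, and the latter always wins. Indeed the paper shows (using that the neighbors $\lambda_{up},\lambda_{dn}$ are never self-conjugate, together with the bound $H(\lambda_{dn})H(\lambda_{up})/H(\lambda)^2<4$) that $b(\Sy_n)/2<b(\Al_n)$ for all $n$; already at $n=5$ one has $b(\Sy_5)=6$ from the self-conjugate $(3,1,1)$ but $b(\Al_5)=5$. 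The hard case is precisely $b(\Sy_n)/2<b(\Al_n)<b(\Sy_n)$, and your outline offers no mechanism for it: here $b(\Al_n)$ is realized by some non-self-conjugate $\mu\neq\lambda^*$, and you must produce enough $\Al_n$-degrees strictly below $f^\mu$, not below $f^{\lambda^*}$. The paper handles this by reducing to the situation where $b(\Al_n)$ equals the second-largest $\Sy_n$-degree $b_2$ and then splitting on the multiplicity $|\s M_2|$, with one subcase using the graph on partitions and the other using the induced character $(\chi_\mu\!\downarrow_{\Sy_{n-1}})^{\Sy_n}$ together with Cauchy--Schwarz.

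Two smaller points. Your ``useful observation'' has the inequality reversed: a self-conjugate $\lambda$ with $b(\Al_n)<f^\lambda\le\sqrt{2}\,b(\Al_n)$ contributes $(f^\lambda)^2/2\le b(\Al_n)^2$, not $>$; you would need $f^\lambda>\sqrt{2}\,b(\Al_n)$. And while the single-box-move idea is exactly what underlies the paper's induced-character step, the way to exploit it is not to bound individual ratios $f^\mu/f^{\lambda^*}$ from below (some box moves shrink the degree drastically), but to use that the constituents of $(\chi\!\downarrow_{\Sy_{n-1}})^{\Sy_n}$ have total degree $n\,f^{\lambda^*}$, that there are fewer than $2n$ of them, and then apply Cauchy--Schwarz after discarding the few constituents of top degree.
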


Unlike the simple groups of Lie type where one can use Lusztig's
classification of their irreducible complex characters, it seems
more difficult to work with the largest character degree of the
alternating groups. For instance, while $b(S)$ is known for $S$ a
simple exceptional groups of Lie type or a simple classical group
whose underlying field is sufficiently large
(see~\cite{Seitz,Larsen-Malle-Tiep}), $b(\Al_n)$ as well as
$b(\Sy_n)$ are far from determined. We note that the current best
bound for $b(\Sy_n)$ is due to A.\,M.~Vershik and
S.\,V.~Kerov~\cite{Vershik-Kerov}.

It is clear that $b(\Sy_n)/2\leq b(\Al_n)\leq b(\Sy_n)$ and as we
will prove in Section~\ref{section3}, indeed
$b(\Sy_n)/2<b(\Al_n)\leq b(\Sy_n)$ is always the case. As far as we
know, it is still unknown for what $n$ the equality
$b(\Al_n)=b(\Sy_n)$ actually occurs. It would be interesting to
solve this. Though it appears at first sight that
$b(\Al_n)=b(\Sy_n)$ holds most of the time, computational evidence
indicates that $b(\Al_n)<b(\Sy_n)$ is true quite often.

When $\Al_n$ and $\Sy_n$ do have the same largest character degree,
Theorem~\ref{theorem-main-1} is indeed a direct consequence of a
similar but stronger inequality for the symmetric groups.

\begin{theorem}\label{theorem-main-2} For every integer $n\geq 7$,
\[
\sum_{\twolineindex{\chi\in\irr(\Sy_n)}{\chi(1)< b(\Sy_n)}}
\hspace{-10pt}\chi(1)^2>2b(\Sy_n)^2.
\]
\end{theorem}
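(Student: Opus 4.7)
Let $D := b(\Sy_n)$ and let $M := \{\lambda \vdash n : f^\lambda = D\}$, with $m := |M|$; here $f^\lambda$ denotes the dimension of the irreducible $\Sy_n$-representation labelled by the partition $\lambda$. Since $\sum_{\lambda \vdash n} (f^\lambda)^2 = |\Sy_n| = n!$, the inequality of Theorem~\ref{theorem-main-2} is equivalent to the single arithmetic inequality
$$n! > (m+2)\,D^2.$$
Noting that $f^{\lambda'} = f^\lambda$ for the conjugate partition, $M$ is closed under conjugation, so $m \geq 2$ whenever some $\lambda^* \in M$ fails to be self-conjugate, and in any case $m \geq 1$.

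My plan is to exhibit a collection of partitions $\mu \vdash n$ with $f^\mu < D$ whose squared dimensions already sum to more than $2D^2$. Fix $\lambda^* \in M$; its Young diagram has some number $r \geq 1$ of removable inner corners and $r+1$ addable outer corners. For each pair $(c, c')$ with $c$ removable, $c'$ addable, and $c \neq c'$, sliding the box from $c$ to $c'$ produces a partition $\mu(c, c') \vdash n$ distinct from $\lambda^*$. By the hook length formula,
$$f^{\mu(c,c')} = \frac{n!}{\prod_{b \in \mu(c,c')} h_{\mu(c,c')}(b)},$$
and the ratio $f^{\mu(c,c')}/D$ reduces to a product of hook-length ratios in the two affected rows and columns, everything else cancelling. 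The goal is then to show that enough of these ratios stay close to $1$ that the total squared sum dominates $(m+2)D^2$.

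The heart of the argument is a uniform local-versus-global estimate: since $\lambda^*$ realises the global maximum, it must lie close to the Vershik-Kerov-Logan-Shepp limit shape. Such $\lambda^*$ have of order $\sqrt{n}$ corners of each type, and for many pairs $(c, c')$ the hook-length perturbation is controlled, giving $f^{\mu(c,c')} \geq (1 - o(1))\,D$. Summing the squares over sufficiently many pairs should beat $(m+2) D^2$ by a wide margin, once one also checks that $M$ itself is not too large; the same shape rigidity should force $m$ to be only slowly growing in $n$.

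I expect two principal obstacles. First, $\lambda^*$ is not known explicitly, so all hook-length estimates have to be uniform in the candidate shape; the worst case is likely to be very unbalanced near-rectangular or ``hook'' diagrams, where the limit-shape heuristic is weakest and a separate direct analysis will be needed. Second, the asymptotic estimates degrade for small $n$, so the range $7 \leq n \leq N_0$ must be handled separately by direct computation: for each such $n$ one lists the character degrees of $\Sy_n$, reads off $D$ and $m$, and verifies $n! > (m+2) D^2$ numerically. The main technical work will sit at the interface between these two regimes and in proving the hook-length perturbation lemma with explicit enough constants to close the gap.
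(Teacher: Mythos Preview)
Your central idea---move a single box in a maximum-degree diagram $\lambda^*$ and harvest many nearby partitions---is exactly what the paper does: those partitions are the irreducible constituents of $(\chi_{\lambda^*}\!\downarrow_{\Sy_{n-1}})^{\Sy_n}$ other than $\chi_{\lambda^*}$ itself. But your execution diverges from the paper at the crucial estimation step, and this is where there is a genuine gap.

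You propose to control each ratio $f^{\mu(c,c')}/D$ individually via hook-length perturbations and the Vershik--Kerov--Logan--Shepp limit shape. This is both unnecessary and, as written, incomplete: the VKLS theory gives only asymptotic control of the shape and no usable explicit constants, and extracting a uniform lower bound $f^{\mu(c,c')}\geq (1-o(1))D$ for ``many'' pairs with error small enough to handle all $n\geq N_0$ would be substantial work that you have not supplied. The paper bypasses all of this with the elementary observation that the induced character $(\chi_{\lambda^*}\!\downarrow_{\Sy_{n-1}})^{\Sy_n}$ has degree exactly $nD$, so the \emph{sum} of the $f^{\mu(c,c')}$ (counting $\lambda^*$ with multiplicity $|R(\lambda^*)|\leq\sqrt{2n}$) equals $nD$. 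One then subtracts off the at most $m-1$ other partitions in $M$ and $\chi_{\lambda^*}$ itself, bounds the number of terms by $2n$, and applies Cauchy--Schwarz. No individual hook estimates, no limit shape.

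Second, your plan to bound $m=|M|$ by ``shape rigidity'' is not an argument; nothing you have written rules out $m$ being, say, of order $\sqrt{n}$. The paper does not bound $m$; instead it treats large $m$ by a completely different mechanism: a graph on partitions with edges $\lambda\leftrightarrow\lambda_{up},\lambda_{dn}$, together with a hook-product inequality $1<H(\lambda_{dn})H(\lambda_{up})/H(\lambda)^2<4$, to show that whenever $|M|\geq 32$ there are already at least $|M|$ partitions with degree in $(D/4,D)$, which suffices. The branching/Cauchy--Schwarz argument is then run only under the hypothesis $|M|\leq 31$, where it yields the inequality for all $n\geq 50$; the range $7\leq n\leq 49$ is checked directly.

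In short: keep the moving-box idea, but replace the limit-shape heuristics with the exact degree identity for the induced character plus Cauchy--Schwarz, and handle large $m$ separately rather than trying to prove it cannot occur.
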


Our ideas to prove Theorems~\ref{theorem-main-1}
and~\ref{theorem-main-2} are different from those
in~\cite{Larsen-Malle-Tiep} and are described briefly as follows. We
first introduce a graph with the partitions of $n$ as vertices and a
partition $\lambda=(\lambda_1\geq\lambda_2\geq\cdots \geq
\lambda_k)$ is connected by an edge to $\lambda_{up}:=(\la_1+1\geq
\la_2\geq\ldots\geq \la_{k-1})$ only when $\la_k=1$ and to
$\la_{dn}:=(\la_1-1\geq\la_2\geq \ldots\geq\la_k\geq 1)$ only when
$\la_1>\la_2$. It turns out that if $\lambda$ corresponds to an
irreducible character of $\Sy_n$ of the largest degree, then
$\lambda$ has precisely two neighbors in this graph. Furthermore,
the degrees of the characters corresponding to $\lambda_{up}$ and
$\lambda_{dn}$ are shown to be `close' to that corresponding to
$\lambda$, see Lemma~\ref{proposition-inequality}. With this in
hand, we deduce that $\Sy_n$ has at least as many irreducible
characters of degree close to but smaller than $b(\Sy_n)$ as those
of degree $b(\Sy_n)$, and therefore Theorem~\ref{theorem-main-2}
holds when the largest character degree $b(\Sy_n)$ has large enough
multiplicity. When this multiplicity is smaller, we consider the
irreducible constituents of the induced character
$(\chi\hspace{-3pt}\downarrow_{\Sy_{n-1}})^{\Sy_n}$ where
$\chi\in\irr(\Sy_n)$ is a character of degree $b(\Sy_n)$ and observe
that there are enough constituents of degree smaller than $b(\Sy_n)$
to prove the desired inequality.

As mentioned already, Theorem~\ref{theorem-main-1} follows from
Theorem~\ref{theorem-main-2} in the case $b(\Al_n)=b(\Sy_n)$.
However, the other case $b(\Al_n)<b(\Sy_n)$ creates some
difficulties. To handle this, we reduce the problem to the situation
where $\Sy_n$ has precisely one irreducible character of degree
$b(\Sy_n)$ and the second largest character degree equal to
$b(\Al_n)$. We then work with the multiplicity of degree $b(\Al_n)$
and follow similar but more delicate arguments than in the case
$b(\Al_n)=b(\Sy_n)$.

Following the ideas outlined above, we can also prove the following

\begin{theorem}\label{theorem-main-3} We have
$\varepsilon(\Al_n)\rightarrow\infty$ and
$\varepsilon(\Sy_n)\rightarrow\infty$ as $n\rightarrow\infty$.
\end{theorem}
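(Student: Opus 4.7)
The plan is to sharpen the Cauchy-Schwarz and box-moving arguments used in the proofs of Theorems~\ref{theorem-main-1} and~\ref{theorem-main-2} into bounds that grow with $n$. For $\Sy_n$, I would fix $\lambda\vdash n$ with $\chi_\lambda(1)=b(\Sy_n)$ and consider the induced character $\eta=(\chi_\lambda\downarrow_{\Sy_{n-1}})^{\Sy_n}$ of total degree $n\,b(\Sy_n)$. By the branching rule $\eta=c\,\chi_\lambda+\sum_\nu\chi_\nu$, where $c=O(\sqrt n)$ is the number of removable corners of $\lambda$ and $\nu$ ranges over at most $c(c+1)=O(n)$ partitions obtained from $\lambda$ by moving a single box, each with multiplicity one. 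Writing $m$ for the multiplicity of $b(\Sy_n)$ in $\irr(\Sy_n)$, the constituents of $\eta$ of degree strictly less than $b(\Sy_n)$ carry total mass at least $(n-c-m)\,b(\Sy_n)$ spread over $O(n)$ distinct characters, so Cauchy-Schwarz yields
\[
\sum_{\chi(1)<b(\Sy_n)}\chi(1)^2 \;\geq\; \frac{((n-c-m)\,b(\Sy_n))^2}{O(n)}.
\]
Whenever $m\leq n/4$, this already produces $\varepsilon(\Sy_n)=\Omega(n)\to\infty$.

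For the complementary high-multiplicity regime $m>n/4$, I would exploit the graph of partitions and Lemma~\ref{proposition-inequality} more heavily: a dense cluster of max-degree partitions contains many pairs connected by short paths of box-moves, and along each such path the lemma forces an intermediate partition whose character has degree strictly below but close to $b(\Sy_n)$. Summing squares of these contributions should give $\varepsilon(\Sy_n)=\Omega(m)\to\infty$. As a fallback, one can combine the Vershik-Kerov upper bound $b(\Sy_n)\leq\sqrt{n!}\exp(-c_0\sqrt n)$ --- which implies $n!/b(\Sy_n)^2\geq\exp(2c_0\sqrt n)\to\infty$ --- with any sub-exponential upper bound on $m$, via the identity $\varepsilon(\Sy_n)=n!/b(\Sy_n)^2-m$.

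For $\Al_n$, when $b(\Al_n)=b(\Sy_n)$ the $\Sy_n$-result descends directly: each $\chi\in\irr(\Sy_n)$ either restricts irreducibly to $\Al_n$ or splits into two halves, both of which are sub-maximal in $\Al_n$, so $\varepsilon(\Al_n)\geq\tfrac12\varepsilon(\Sy_n)$. When $b(\Al_n)<b(\Sy_n)$, I would follow the reduction from the proof of Theorem~\ref{theorem-main-1}, in which $b(\Al_n)$ is controlled by the second-largest degree of $\Sy_n$, and run the same induced-character and graph arguments at that level. The hard part will be the high-multiplicity regime for $\Sy_n$: turning a large value of $m$ into a comparably large collection of genuinely sub-maximal characters requires a multi-step quantitative strengthening of Lemma~\ref{proposition-inequality} tracking degree changes along chains in the partition graph, or an independent polynomial (at worst sub-exponential) bound on $m$ that the elementary branching-rule arguments do not immediately supply.
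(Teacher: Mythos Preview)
Your outline is essentially the paper's own argument, and it is correct; the two-regime split on the multiplicity $m=|\s M_1|$ together with the Cauchy--Schwarz bound from Proposition~\ref{proposition-Theorem2} and the graph bound from Proposition~\ref{proposition-cardinalityofM} is exactly how the paper proceeds. The paper records the single inequality
\[
\varepsilon(\Sy_n)\;\geq\;\max\left\{\frac{|\s M_1|}{16},\ \frac{(n-\sqrt{2n}-|\s M_1|+1)^2}{2n}\right\},
\]
and for $\Al_n$ in the case $b(\Al_n)<b(\Sy_n)$ it introduces $x=|\{\chi:\chi(1)>b(\Al_n)\}|$ and $y=$ multiplicity of $b(\Al_n)$, obtaining the three lower bounds $x/2$, $(y-4x)/32$, and $(n-\sqrt{2n}-2x-y+1)^2/(2n)$, whose maximum tends to infinity.

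Where you overshoot is in labelling the high-multiplicity regime the ``hard part'' and reaching for a multi-step strengthening of Lemma~\ref{proposition-inequality} or a Vershik--Kerov fallback. Neither is needed: Proposition~\ref{proposition-cardinalityofM}(1), already proved in Section~\ref{section-graph}, states outright that the number of partitions $\mu$ with $b_1/4<\chi_\mu(1)<b_1$ is at least $|\s M_1|$, which gives $\varepsilon(\Sy_n)\geq |\s M_1|/16$ immediately. Your earlier sentence ``summing squares of these contributions should give $\varepsilon(\Sy_n)=\Omega(m)$'' is therefore not a hope but a fact you already have in hand. (The Vershik--Kerov route, incidentally, would require comparing their exponent with the Hardy--Ramanujan constant governing $p(n)$, and it is not clear the constants fall the right way; fortunately this is moot.) For $\Al_n$ the same remark applies at the $b_2$ level via Proposition~\ref{proposition-cardinalityofM}(2), once you track the parameter $x$ counting degrees strictly above $b(\Al_n)$.
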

\noindent This convinces us to believe that
$\varepsilon(S)\rightarrow\infty$ as $|S|\rightarrow\infty$ for all
non-abelian simple groups $S$ and it would be interesting to confirm
this.

The paper is organized as follows. In the next section, we give a
brief summary of the character theory of the symmetric and
alternating groups. The graph on partitions and relevant results are
presented in Section~\ref{section-graph}. Section~\ref{section3} is
devoted to the proofs of Theorems~\ref{theorem-main-1}
and~\ref{theorem-main-2} and finally Theorem~\ref{theorem-main-3} is
proved in Section~\ref{section5}.


\section{Preliminaries}

 For the reader's convenience and to introduce notation, we briefly summarize some basic facts on the representation theory of the
symmetric and alternating groups.

We say that a finite sequence $\lambda:=(\lambda_1,\lambda_2, \ldots
,\lambda_k)$ is a partition of $n$ if
$\lambda_1\geq\lambda_2\geq\cdots \geq\lambda_k$ and
$\lambda_1+\lambda_2+\cdots+\lambda_k=n$. The Young diagram
corresponding to $\lambda$, denoted by $Y_\lambda$, is defined to be
the finite subset of $\NN\times\NN$ such that
\[(i,j)\in Y_\lambda \text{ if and only if } i\leq \lambda_j.\] The conjugate partition of
$\lambda$, denoted by $\overline{\lambda}$, is the partition whose
associated Young diagram is obtained from $Y_\lambda$ by reflecting
it about the line $y=x$. So $\lambda=\overline{\lambda}$ if and only
if $Y_\lambda$ is symmetric and in that case we say that $\lambda$
is self-conjugate.

For each node $(i,j)\in Y_\lambda$, the so-called \emph{hook length}
$h(i,j)$ is defined by
\[h(i,j):=1+\lambda_j+\overline{\lambda}_i-i-j.\]
That is, $h(i,j)$ is the number of nodes that are directly above it,
directly to the right of it, or equal to it. The \emph{hook-length
product} of $\lambda$ is then defined by
\[
H(\lambda):=\prod_{(i,j)\in Y_\lambda}h_\la(i,j).
\]

For each positive integer $n$, it is known that there is a
one-to-one correspondence between the irreducible complex characters
of the symmetric group $\Sy_n$ and the partitions of $n$. We denote
by $\chi_\lambda$ the irreducible character of $\Sy_n$ corresponding
to $\lambda$. The degree of $\chi_\lambda$ is given by the
\emph{hook-length formula}, see~\cite{Frame-Robinson-Thrall}:
\[\chi_{\lambda}(1)=\frac{n!}{H(\lambda)}.\]

The irreducible characters of $\Al_n$ can be obtained by restricting
those of $\Sy_n$ to $\Al_n$. More explicitly, if $\lambda$ is not
self-conjugate then $\chi_{\lambda}\hspace{-3pt}\downarrow_{\Al_n}=
\chi_{\overline{\lambda}}\hspace{-3pt}\downarrow_{\Al_n}$ is
irreducible and otherwise,
$\chi_{\lambda}\hspace{-3pt}\downarrow_{\Al_n}$ splits into two
different irreducible characters of the same degree. Therefore, the
degrees of the irreducible characters of $\Al_n$ are
$$ \left\{\begin
{array}{ll}
\chi_\lambda(1) & \text{ if } \lambda\neq\overline{\lambda},\\
\chi_\lambda(1)/2 & \text{ if } \lambda=\overline{\lambda}.
\end {array} \right.$$

For each partition $\lambda$ of $n$, let $A(\lambda)$ and
$R(\lambda)$ denote the sets of nodes that can be respectively added
or removed from $Y_\lambda$ to obtain another Young diagram
corresponding to a certain partition of $n+1$ or $n-1$ respectively.
As shown in \cite[page~67]{Larsen-Malle-Tiep}, we have
$|A(\lambda)|^2-|A(\lambda)|\leq 2n$, and hence
\[A(\lambda)\leq \frac{1+\sqrt{1+8n}}{2}.\]
Similarly, we have $|R(\lambda)|^2+|R(\lambda)|\leq 2n$ and
\[R(\lambda)\leq \frac{-1+\sqrt{1+8n}}{2}.\]

The well-known branching rule (see~\cite[\S9.2]{James} for instance)
asserts that the restriction of $\chi_\lambda$ to $\Sy_{n-1}$ is a
sum of irreducible characters of the form $\chi_{Y_\lambda
\backslash \{ (i,j) \}}$ as $(i,j)$ goes over all nodes in
$R(\lambda)$. Also, by Frobenius reciprocity, the induction of
$\chi_\lambda$ to $\Sy_{n+1}$ is a sum of irreducible characters of
the form $\chi_{Y_\lambda \cup \{(i,j)\}}$ as $(i,j)$ goes over all
nodes in $A(\lambda)$.

It follows from the branching rule that the number of irreducible
constituents of the induced characters
$(\chi_\lambda\hspace{-3pt}\downarrow_{\Sy_{n-1}})^{\Sy_n}$ is at
most
\[
\frac{-1+\sqrt{1+8n}}{2}\cdot \frac{1+\sqrt{1+8(n-1)}}{2}.
\]
In particular, this number is smaller than $2n$.


\section{A graph on partitions}\label{section-graph}

Let $\s P$ denote the set of partitions of $n$. Furthermore, let
\[b_1=b(\Sy_n)>b_2>\ldots>b_m=1\] be the distinct character degrees of $\Sy_n$.
For every $1\leq i\leq m$ let
\[
\s M_i:=\{\la\in\s P\,|\,\chi_\la(1)=b_i\}
\]
so that
\[
\s P=\s M_1\cup \s M_2\cup\ldots\cup \s M_m\quad (\textrm{disjoint
union}).
\]
\begin{defin}
  For a partition $\la=(\lambda_1\geq \lambda_2\geq \ldots \geq
  \la_k)$ we define partitions $\la_{dn}$ and $\la_{up}$ in the
  following way. The partition $\la_{dn}$ is defined only if
  $\la_1>\la_2$ and in this case let $\la_{dn}:=(\la_1-1\geq\la_2\geq
  \ldots\geq\la_k\geq 1)$. Similarly, the partition $\la_{up}$ is
  defined only if $\la_k=1$ and in this case let $\la_{up}:=
  (\la_1+1\geq \la_2\geq\ldots\geq \la_{k-1})$.
\end{defin}
Next we define a graph on $\s P$.
\begin{defin}
  Let $\G=(V,E)$ be the graph with vertex set $V=\s P$ and edge set
  $E=\{(\la,\mu)\,|\,\mu=\la_{dn}\textrm{ or }\mu=\la_{up}\}$.
  Furthermore, let $\G_{\s M_i}$ be the induced subgraph of $\G$ on $\s M_i$.
\end{defin}
For each vertex $\la\in V$, let $d(\la)$ denote the degree of $\la$,
that is, the number of vertices that are connected to $\la$ by an
edge of $\G$. It is clear that $d(\la)\leq 2$ for every $\la\in V$.
Moreover, every connected component of $\G$ is a simple path.
\begin{lemma}\label{lemma-d(lambda)=2}
  For every $1\leq r\leq m$ we have
  \[
  |\{\la\in \s M_r\,|\,d(\la)<2\}|\leq 2|\cup_{i<r}\s M_i|.
  \]
  In particular, we have the following
  \begin{enumerate}
  \item $d(\la)=2$ for all partitions $\la\in \s M_1$.
  \item If $|\s M_1|=1$, then $d(\la)=2$ for all but at most two
  partitions
    $\la\in \s M_2$.
  \end{enumerate}
\end{lemma}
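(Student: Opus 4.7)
The plan is to build a map
\[
\phi\colon\{\lambda\in\s P:d(\lambda)<2\}\longrightarrow\s P
\]
with two properties: (i) $\chi_{\phi(\lambda)}(1)>\chi_\lambda(1)$ and (ii) $|\phi^{-1}(\mu)|\leq 2$ for every $\mu\in\s P$. Granted these, every $\lambda\in\s M_r$ with $d(\lambda)<2$ satisfies $\phi(\lambda)\in\cup_{i<r}\s M_i$ by (i), and then (ii) yields $|\{\lambda\in\s M_r:d(\lambda)<2\}|\leq 2|\cup_{i<r}\s M_i|$. The two bulleted assertions are simply the cases $r=1$ (bound $0$) and, when $|\s M_1|=1$, $r=2$ (bound $2$).

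The condition $d(\lambda)<2$ splits into case~B ($\lambda_k\geq 2$) and the complementary case~A ($\lambda_k=1$ and $\lambda_1=\lambda_2$). In case~B set
\[
\phi(\lambda):=(\lambda_1,\ldots,\lambda_{k-1},\lambda_k-1,1),
\]
obtained by moving the rightmost box of the last row into a new row of length $1$. In case~A, the conjugate $\overline\lambda$ falls in case~B (since $\lambda_k=1$ forces $\overline\lambda_1>\overline\lambda_2$ and $\lambda_1=\lambda_2$ forces the last part of $\overline\lambda$ to be at least $2$), so set $\phi(\lambda):=\overline{\phi(\overline\lambda)}$. For property~(ii): any $\mu\in\s P$ has at most one case-B preimage --- the partition $(\mu_1,\ldots,\mu_{k'-2},\mu_{k'-1}+1)$, where $k'$ is the length of $\mu$, valid when $\mu_{k'}=1$ and (if $k'\geq 3$) $\mu_{k'-2}>\mu_{k'-1}$ --- and, by conjugation, at most one case-A preimage. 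The two differ in their last part ($\geq 2$ versus $=1$), hence are distinct.

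The main obstacle is property~(i) in case~B, which by the hook-length formula is equivalent to $H(\phi(\lambda))<H(\lambda)$. A careful bookkeeping reveals that the removed box $(\lambda_k,k)$ and the added box $(1,k+1)$ both have hook length $1$ and contribute nothing, while only the hooks in row $1$ and row $\lambda_k$ (for columns $j<k$) and at $(i,k)$ for $2\leq i\leq\lambda_k-1$ actually change --- by $+1$, $-1$, and $-1$ respectively. Pairing the row-$1$ and row-$\lambda_k$ changes at each column $j<k$, using the identity $h_\lambda(1,j)-h_\lambda(\lambda_k,j)=\lambda_k-1$, yields the factor
\[
\frac{(h_\lambda(1,j)+1)(h_\lambda(\lambda_k,j)-1)}{h_\lambda(1,j)\,h_\lambda(\lambda_k,j)}=1-\frac{\lambda_k}{h_\lambda(1,j)\,h_\lambda(\lambda_k,j)}<1,
\]
and the remaining factors $(h_\lambda(i,k)-1)/h_\lambda(i,k)$ are likewise each $<1$. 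For $n\geq 3$ this produces the desired strict inequality. Case~A then follows at once from $\chi_\mu(1)=\chi_{\overline\mu}(1)$. Beyond this hook-length comparison the argument is elementary combinatorics and conjugation symmetry.
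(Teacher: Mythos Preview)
Your proof is correct and follows essentially the same strategy as the paper's: construct a degree-increasing single-box move on partitions missing a neighbor in $\Gamma$, handle the two missing-neighbor types via conjugation symmetry, and verify the key hook-length inequality by direct computation. The only differences are organizational --- the paper treats $\{\lambda:\nexists\,\lambda_{dn}\}$ and $\{\lambda:\nexists\,\lambda_{up}\}$ via two separate injections into $\cup_{i<r}\s M_i$ and a union bound, whereas you use a disjoint case split and a single at-most-$2$-to-$1$ map --- and your specific box move (last row to a new bottom row) is the conjugate of the paper's (extending the first row), so the two hook-length computations are mirror images of each other.
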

\begin{proof}
  First we prove that
  \[
  |\{\la\in \s M_r\,|\,\nexists\;\la_{dn}\}|\leq |\cup_{i<r}\s M_i|.
  \]
  Let $\la=(\la_1\geq\la_2\geq \ldots\geq \la_k)\in \s M_r$ such that
  $\la_1=\la_2=\ldots=\la_s=t$ but $\la_{s+1}<t$ for some $1< s\leq
  k$. Let
  $\la_{\ra 1}:=(\lambda_1+1,\lambda_2,...,\lambda_s-1,\lambda_{s+1},...,\lambda_k)$
  and $x_j:=h_\la(s,j)$ for every $1\leq j\leq t-1$.  Calculating the
  ratio of the hook-length products $H(\la_{\ra 1})$ and $H(\la)$ we get
  \begin{align*}
  \frac{H(\la_{\ra 1})}{H(\la)}&=
    \frac{\prod_{j=1}^{t-1}(h_{\la_{\ra 1}}(s,j)h_{\la_{\ra 1}}(1,j))
    \prod_{i=2}^{s-1}h_{\la_{\ra 1}}(i,t)}
    {\prod_{j=1}^{t-1}(h_{\la}(s,j)h_{\la}(1,j))\prod_{i=2}^{s-1}h_{\la}(i,t)}\\
    &=\frac{\prod_{j=1}^{t-1}((x_j-1)(x_j+s))\cdot (s-2)!}
    {\prod_{j=1}^{t-1}(x_j(x_j+s-1))\cdot (s-1)!}\\
    &=\frac{1}{s-1}\prod_{j=1}^t\left(1-\frac{s}{x_j(x_j+s-1)}\right)<1.
  \end{align*}
  Hence for the degrees of characters we get
  \[\frac{\chi_\la(1)}{\chi_{\la_{\ra 1}(1)}}=\frac{H(\la_{\ra 1})}{H(\la)}<1.\]
  Thus, we have defined a map $\la\mapsto \la_{\ra 1}$
  from the set
  $\{\la\in \s M_r\,|\,\nexists\;\la_{dn}\}$ into $\cup_{i<r}\s M_i$.
  This map is clearly injective, so
  \[
  |\{\la\in \s M_r\,|\,\nexists\;\la_{dn}\}|\leq |\cup_{i<r}\s M_i|
  \]
  follows. The dual map  $\la\mapsto \overline{\la}$ defines a
  bijection between $\{\la\in \s M_r\,|\,\nexists\;\la_{dn}\}$ and
  $\{\la\in \s M_r\,|\,\nexists\;\la_{up}\}$. It follows that
  \[
  |\{\la\in \s M_r\,|\,\nexists\;\la_{up}\}|\leq |\cup_{i<r}\s M_i|.
  \]
  Therefore,
  \[
  |\{\la\in \s M_r\,|\,d(\la)<2\}|
  \leq|\{\la\in \s M_r\,|\,\nexists\;\la_{dn}\}|+
  |\{\la\in \s M_r\,|\,\nexists\;\la_{up}\}|
  \leq 2|\cup_{i<r}\s M_i|
  \]
  and the proof is complete.
\end{proof}
\begin{lemma}\label{proposition-inequality}
  If $d(\la)=2$ then \[1<\frac{H(\la_{dn})H(\la_{up})}{H(\la)^2}<4.\]
\end{lemma}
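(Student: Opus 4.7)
The plan is to write the ratio $H(\la_{dn})H(\la_{up})/H(\la)^2$ as an explicit product over hook lengths of $\la$, and then bound both ends. Writing $q_i:=h_\la(i,1)$ and $p_j:=h_\la(1,j)$ for the first-row and first-column hook lengths, the key observation is that passing from $\la$ to $\la_{dn}$ only disturbs hook lengths along the first row and first column of $\la$: $h_\la(1,1)$ is unchanged, each $q_i$ with $2\leq i\leq\la_1-1$ decreases by exactly one, each $p_j$ with $2\leq j\leq k$ increases by exactly one, and the removed corner $(\la_1,1)$ and the added corner $(1,k+1)$ each carry hook length $1$. A symmetric analysis works for $\la_{up}$, where one removes $(1,k)$ and adjoins $(\la_1+1,1)$. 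This gives two product formulas whose factors are of the form $\frac{q_i\pm 1}{q_i}$ and $\frac{p_j\pm 1}{p_j}$.

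Multiplying the two formulas, the factors pair up cleanly: for the interior indices each pair combines into $\frac{q_i^2-1}{q_i^2}$ or $\frac{p_j^2-1}{p_j^2}$, while the two isolated boundary terms (from $q_{\la_1}=1$ and $p_k=1$) each contribute a factor $2$. The outcome is the compact identity
\[
\frac{H(\la_{dn})H(\la_{up})}{H(\la)^2}=4\prod_{i=2}^{\la_1-1}\frac{q_i^2-1}{q_i^2}\prod_{j=2}^{k-1}\frac{p_j^2-1}{p_j^2}.
\]
The upper bound $<4$ is now immediate: each factor is strictly in $(0,1)$, and (excluding the degenerate case $\la=(2,1)$) at least one such factor is present.

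For the lower bound I combine two ingredients: (i) the elementary estimates $q_i=\la_1+\overline{\la}_i-i\geq\la_1+1-i$ (since $\overline{\la}_i\geq 1$ whenever $i\leq\la_1$) and $p_j\geq k+1-j$; and (ii) the telescoping identity $\prod_{m=2}^{N}(1-1/m^2)=(N+1)/(2N)$. Since $1-1/x^2$ is increasing in $x$, substituting $m=\la_1+1-i$ gives
\[
\prod_{i=2}^{\la_1-1}\frac{q_i^2-1}{q_i^2}\geq\prod_{m=2}^{\la_1-1}\frac{m^2-1}{m^2}=\frac{\la_1}{2(\la_1-1)},
\]
and analogously $\prod_{j=2}^{k-1}\frac{p_j^2-1}{p_j^2}\geq\frac{k}{2(k-1)}$. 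Therefore
\[
\frac{H(\la_{dn})H(\la_{up})}{H(\la)^2}\geq\frac{\la_1\,k}{(\la_1-1)(k-1)}=1+\frac{\la_1+k-1}{(\la_1-1)(k-1)}>1,
\]
as desired. The extremal case, where both estimates are sharp, is the hook-shaped partition $\la=(\la_1,1^{k-1})$.

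The main obstacle is the case-by-case bookkeeping in the first step: one must verify, for each of (first row, first column, interior cells, removed corner, added corner), that only first-row and first-column hooks of $\la$ move, and by exactly $\pm 1$. Once that product formula is in hand, both bounds reduce to elementary manipulations; the only mild annoyance is the boundary case $\la=(2,1)$, where the upper bound is attained rather than strict, but this partition lies well outside the range of $n$ relevant to the main theorems.
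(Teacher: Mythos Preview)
Your argument is correct and follows the same route as the paper: both derive the identity
\[
\frac{H(\la_{dn})H(\la_{up})}{H(\la)^2}=4\prod_{i=2}^{\la_1-1}\frac{q_i^2-1}{q_i^2}\prod_{j=2}^{k-1}\frac{p_j^2-1}{p_j^2}
\]
by tracking the first-row and first-column hook changes, and then read off the upper bound immediately. For the lower bound the paper uses the slightly different observation that the $q_i$ (respectively $p_j$) are \emph{distinct} integers at least $2$, so that $2\prod_i(1-1/q_i^2)>2\prod_{m=2}^{\infty}(1-1/m^2)=1$; your route via the explicit estimate $q_i\geq\la_1+1-i$ and the finite telescoping product $\prod_{m=2}^{N}(1-1/m^2)=(N+1)/(2N)$ reaches the same conclusion and is equally valid, with the bonus of an explicit quantitative lower bound.
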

\begin{proof}
  Let $\la=(\la_1> \la_2\geq \ldots\geq \la_k=1)\in \s P$ with
  $d(\la)=2$. Furthermore, let $x_j:=h(1,j)$ for $2\leq j\leq \la_1-1$
  and $y_i:=h(i,1)$ for $2\leq i\leq k-1$. Then we have
  $x_2>x_3>\ldots>x_{\la_1-1}\geq 2$ and $y_2>y_3>\ldots>y_{k-1}\geq
  2$.

  Calculating the ratios $H(\la_{dn})/H(\la)$ and
  $H(\la_{up})/H(\la)$ we obtain
  \[
  \frac{H(\la_{dn})}{H(\la)}=
  \prod_{j=2}^{\la_1-1}\frac{x_j-1}{x_j}\cdot
  2\prod_{i=2}^{k-1}\frac{y_i+1}{y_i}\]
  and
  \[\frac{H(\la_{up})}{H(\la)}=
  2\prod_{j=2}^{\la_1-1}\frac{x_j+1}{x_j}\cdot \prod_{i=2}^{k-1}\frac{y_i-1}{y_i}.
  \]
  It follows that
  \[
  \frac{H(\la_{dn})H(\la_{up})}{H(\la)^2}=
  4\prod_{j=2}^{\la_1-1}\frac{x_j^2-1}{x_j^2}\prod_{i=2}^{k-1}
  \frac{y_i^2-1}{y_i^2}.
  \]
  The right hand side of this inequality is clearly smaller than 4.
  Regarding the lower bound, we argue as follows. First, since the
  hook lengths $x_i$ are different integers bigger than $1$, we have
  \[
  2\prod_{j=2}^{\la_1-1}\frac{x_j^2-1}{x_j^2}>2\prod_{m=2}^{\infty}
  \frac{(m-1)(m+1)}{m^2}=1.
  \]
  The same can be said about $2\prod_{i=2}^{k-1}
  \frac{y_i^2-1}{y_i^2}$ and so their product is also bigger than
  $1$. The proof is complete.
\end{proof}
Using the previous lemma, we can show that $\Sy_n$ has many
irreducible characters of degree close to but smaller than
$b(\Sy_n)$.
\begin{prop}\label{proposition-cardinalityofM}
  For every $1\leq r\leq m$ we have
  \[
  \left|\left\{\mu\in\s P\,\Big|\,
  \frac{b_r}{4}< \chi_\mu(1)< b_r\right\}\right|\geq|\s M_r|-4|\cup_{i<r}\s M_i|.
  \]
  In particular, we have
  \begin{enumerate}
    \item
      \[
      \left|\left\{\mu\in\s P\,\Big|\,
      \frac{b_1}{4}< \chi_\mu(1)< b_1\right\}\right|\geq|\s M_1|.
      \]
    \item
      If $|\s M_1|=1$, then
      \[
      \left|\left\{\mu\in\s P\,\Big|\,
      \frac{b_2}{4}< \chi_\mu(1)< b_2\right\}\right|\geq|\s M_2|-4.
      \]
  \end{enumerate}
\end{prop}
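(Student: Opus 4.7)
The plan is to combine Lemmas~\ref{lemma-d(lambda)=2} and~\ref{proposition-inequality} to control the $\Gamma$-neighbors of most $\lambda\in\s M_r$ and then convert this into the desired lower bound on $\s N_r:=\{\mu\in\s P\,:\,b_r/4<\chi_\mu(1)<b_r\}$ via a counting argument on the path components of $\Gamma$. First I would isolate a \emph{good} subset $\s M_r^{\star}\subseteq\s M_r$ of size at least $|\s M_r|-4|\cup_{i<r}\s M_i|$: by Lemma~\ref{lemma-d(lambda)=2} at most $2|\cup_{i<r}\s M_i|$ elements of $\s M_r$ have $d(\lambda)<2$, and among those with $d(\lambda)=2$ a further $\le 2|\cup_{i<r}\s M_i|$ are \emph{blocked} (meaning they have at least one neighbor in $\cup_{i<r}\s M_i$), since each vertex of $\Gamma$ has degree at most $2$. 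The unblocked, degree-$2$ elements form $\s M_r^{\star}$.

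For $\lambda\in\s M_r^{\star}$, Lemma~\ref{proposition-inequality} rewrites as
\[
\chi_{\lambda_{dn}}(1)\,\chi_{\lambda_{up}}(1)\;=\;\chi_\lambda(1)^2\,\frac{H(\lambda_{dn})H(\lambda_{up})}{H(\lambda)^2}\;\in\;(b_r^2/4,\;b_r^2).
\]
Since $\lambda$ is unblocked, both factors are at most $b_r$, which combined with the lower product bound forces each to exceed $b_r/4$; hence $\lambda_{dn},\lambda_{up}\in\s M_r\cup\s N_r$. The strict upper bound $<b_r^2$ further prevents both factors from equaling $b_r$, so every $\lambda\in\s M_r^{\star}$ has at least one neighbor in $\s N_r$. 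As a byproduct, no three consecutive vertices of a path component of $\Gamma$ can all lie in $\s M_r$, since otherwise the middle one would yield $\chi^2/(\chi\chi)=1$, violating the strict inequality.

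To finish one needs $|\s N_r|\ge|\s M_r^{\star}|$. My strategy is to exploit the two injective maps $\phi,\psi\colon\s M_r^{\star}\to\s M_r\cup\s N_r$ defined by $\phi(\lambda)=\lambda_{dn}$ and $\psi(\lambda)=\lambda_{up}$ (each injective because $\lambda=(\lambda_{dn})_{up}=(\lambda_{up})_{dn}$). Their joint image covers $2|\s M_r^{\star}|-|\phi(\s M_r^{\star})\cap\psi(\s M_r^{\star})|$ slots, and any $\mu$ in the intersection satisfies $\mu_{dn},\mu_{up}\in\s M_r^{\star}\subseteq\s M_r$, whence Lemma~\ref{proposition-inequality} at $\mu$ forces $\chi_\mu(1)\in(b_r/2,b_r)$, placing $\mu$ in $\s N_r$. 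A careful bookkeeping of the $\s M_r$-part versus the $\s N_r$-part of $\phi(\s M_r^{\star})\cup\psi(\s M_r^{\star})$, using the path structure of $\Gamma$ and the no-three-consecutive constraint, should then deliver $|\s N_r|\ge|\s M_r^{\star}|\ge|\s M_r|-4|\cup_{i<r}\s M_i|$.

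The principal difficulty is this final counting step: internal $\s M_r$-$\s M_r$ edges of $\Gamma$ divert some $\phi$- and $\psi$-values into $\s M_r$ rather than $\s N_r$, and $\s N_r$-vertices with two $\s M_r^{\star}$-neighbors cause $\phi$ and $\psi$ to coincide; one must show that these two sources of loss are absorbed by the factor-of-two surplus coming from using both maps. The structural leverage for this is the no-three-consecutive $\s M_r$ constraint together with the observation that each connected component of $\Gamma$ is a simple path (degree $\le 2$ everywhere, and first-part monotonicity along the $dn$ direction rules out cycles), which localizes every potential collision.
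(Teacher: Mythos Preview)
Your overall architecture matches the paper's: isolate a ``good'' subset $X=\s M_r^{\star}\subseteq\s M_r$ of size at least $|\s M_r|-4|\cup_{i<r}\s M_i|$ (the paper obtains $X$ via the same two deductions you give), and then build an injection $X\to\s N_r$ using Lemma~\ref{proposition-inequality}. Your bound $|\s M_r^{\star}|\ge |\s M_r|-4|\cup_{i<r}\s M_i|$ is correct, and your observation that each $\la\in\s M_r^{\star}$ has both neighbors in $\s M_r\cup\s N_r$ with at least one in $\s N_r$ is exactly what is needed.

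However, there is a sign error that, once fixed, dissolves your ``principal difficulty''. In your displayed formula the $H$-ratio is on the wrong side: since $\chi_\nu(1)=n!/H(\nu)$, one has
\[
\chi_{\la_{dn}}(1)\,\chi_{\la_{up}}(1)\;=\;\chi_\la(1)^2\cdot\frac{H(\la)^2}{H(\la_{dn})H(\la_{up})},
\]
so the stated range $(b_r^2/4,b_r^2)$ is right but the formula is inverted. The same inversion bites in your treatment of $\mu\in\phi(\s M_r^{\star})\cap\psi(\s M_r^{\star})$: applying Lemma~\ref{proposition-inequality} at $\mu$ (where $\mu_{dn},\mu_{up}\in\s M_r^{\star}$) gives
\[
\chi_\mu(1)^2\;=\;b_r^2\cdot\frac{H(\mu_{dn})H(\mu_{up})}{H(\mu)^2}\;\in\;(b_r^2,\,4b_r^2),
\]
so $\chi_\mu(1)>b_r$, \emph{not} $\chi_\mu(1)\in(b_r/2,b_r)$ as you wrote. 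But $\mu$ is a neighbor of an unblocked $\la\in\s M_r^{\star}$, hence $\chi_\mu(1)\le b_r$; contradiction. Thus no vertex of $\s M_r\cup\s N_r$ has both neighbors in $\s M_r^{\star}$, and in particular no $\mu\in\s N_r$ does. Consequently the map $\la\mapsto(\text{any }\s N_r\text{-neighbor of }\la)$ is already an injection $\s M_r^{\star}\hookrightarrow\s N_r$, and the elaborate bookkeeping with $\phi,\psi$, internal $\s M_r$--$\s M_r$ edges, and the ``factor-of-two surplus'' is unnecessary.

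For comparison, the paper reaches the same injection by a slightly different structural route: it first shows that $H$ has no interior local maximum on any path component $C$, deduces $|C\cap\s M_r|\le2$ (with the two vertices either adjacent or separated only by vertices of $\cup_{i<r}\s M_i$), and then assigns to each $\la\in X$ its non-$X$ neighbor. Your corrected argument is arguably more direct, since it avoids the local-extremum analysis entirely.
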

\begin{proof}
  For a real-valued function $f:V\mapsto\RR$ defined on the vertex set
  of the graph $\G=(V,E)$ we say that $x\in V$ is a local maximum
  (resp. minimum) of $f$ if $f(y)\leq f(x)$ (resp. $f(y)\geq f(x)$)
  for every $(x,y)\in E$.

  Let $C$ be any connected component of $\G$, so $C$ is a simple
  path. We note that if $d(\la)=2$ then either $H(\la)<H(\la_{dn})$ or
  $H(\la)<H(\la_{up})$ by
  Lemma~\ref{proposition-inequality}. Therefore, there is no local
  maximum $\la\in C$ of the hook-length product function $H:C\mapsto
  \NN$ with $d(\la)=2$. It follows that if $\la,\mu\in C$ are both
  local minimums of $H$ on $C$, then $H$ is constant on the subpath
  connecting $\la$ and $\mu$. Furthermore, the restriction of $H$ to a
  subpath of $C$ of length $\geq 3$ cannot be constant, since for an
  inner point $\la$ of such a subpath we would have
  \[\frac{H(\la_{dn})H(\la_{up})}{H(\la)^2}=1\] and this violates the
  inequality in Lemma~\ref{proposition-inequality}.

  It follows from this argument that $|C\cap \s M_r|\leq
  2$. Furthermore, if $|C\cap \s M_r|=2$, then the two vertices of
  $C\cap\s M_r$ are either neighboring vertices in $\G$ or all the
  inner points of the subpath connecting them are elements from the
  set $\cup_{i<r}\s M_i$.
  This implies that
  \[
  |\{\la\in\s M_r\,|\,d(\la)=2,\ \min(H(\la_{dn}),H(\la_{up}))<H(\la)\}|
  \leq 2|\cup_{i<r}\s M_i|.
  \]

  Let \[X:=\{\la\in\s M_r\,|\,d(\la)=2,\ \min(H(\la_{dn}),H(\la_{up}))\geq
  H(\la)\}.\]
  Taking also the result of Lemma~\ref{lemma-d(lambda)=2} into account
  we deduce that  \[|X|\geq |\s M_r|-4|\cup_{i<r}\s M_i|.\]

  Now, for every $\la\in X$ we will associate a $\mu\in\s P$
  such that $(\la,\mu)\in E$ and
  $\chi_\la(1)/4<\chi_\mu(1)<\chi_\la(1)$. Let $C$ be the component of $\G$
  containing $\la$. If $\la$ is the only vertex of $C\cap X$
  then \[1<\frac{H(\la_{dn})}{H(\la)},\frac{H(\la_{up})}{H(\la)}<4\]
  so that both $\mu=\la_{dn}$ and $\mu=\la_{up}$ are good choices.  On
  the other hand, if $|C\cap  X|=2$, then $|\{\la_{dn},\la_{up}\}\cap
   X|=1$ and we just choose $\mu$ to be the vertex in
  $\{\lambda_{dn},\lambda_{up}\}$ that is not in $X$.

  It remains to prove that the function $\la\mapsto \mu$ we have just defined is
  injective. But this follows from the fact that disjoint elements of
  $X$ cannot have a common neighbor in $\G$.
\end{proof}


\section{Theorems \ref{theorem-main-1} and~\ref{theorem-main-2}}\label{section3}

We now show that Proposition \ref{proposition-cardinalityofM}
implies Theorem~\ref{theorem-main-2} when the cardinality of $\s
M_1$ is large enough.
\begin{cor}\label{corollary-Theorem2}
  If $|\s M_1|\geq 32$, then Theorem~\ref{theorem-main-2} holds.
\end{cor}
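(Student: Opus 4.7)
The plan is to apply part (1) of Proposition~\ref{proposition-cardinalityofM} directly and then exploit the lower bound $b_1/4$ on the degrees it produces. Specifically, by that proposition we are guaranteed at least $|\s M_1|$ distinct partitions $\mu$ of $n$ with
\[
\frac{b_1}{4}<\chi_\mu(1)<b_1,
\]
so each such character satisfies $\chi_\mu(1)^2>b_1^2/16$.

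Next I would simply count: under the hypothesis $|\s M_1|\geq 32$, summing $\chi_\mu(1)^2$ over these at least $32$ characters gives
\[
\sum_{\twolineindex{\mu\in\s P}{b_1/4<\chi_\mu(1)<b_1}}\hspace{-6pt}\chi_\mu(1)^2
\;>\;32\cdot\frac{b_1^2}{16}\;=\;2b_1^2.
\]
Since every such $\chi_\mu$ is an irreducible character of $\Sy_n$ with $\chi_\mu(1)<b_1=b(\Sy_n)$, this sum is bounded above by $\sum_{\chi(1)<b(\Sy_n)}\chi(1)^2$, which is exactly the quantity appearing on the left of Theorem~\ref{theorem-main-2}. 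This yields the desired inequality immediately.

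There is essentially no obstacle here, as all the heavy lifting has been absorbed into Proposition~\ref{proposition-cardinalityofM}: the threshold value $32$ in the hypothesis is precisely the smallest integer $N$ for which $N\cdot(1/4)^2\geq 2$, i.e.\ for which the lower bound $b_1/4$ on the degrees is strong enough to produce the factor $2$ on the right-hand side of Theorem~\ref{theorem-main-2}. The only thing worth double-checking is that the partitions produced by Proposition~\ref{proposition-cardinalityofM} are indeed pairwise distinct (so that their squared degrees really do add up), and this is guaranteed by the injectivity statement at the very end of that proposition's proof.
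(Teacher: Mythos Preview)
Your argument is correct and essentially identical to the paper's own proof: both apply Proposition~\ref{proposition-cardinalityofM}(1) to obtain at least $|\s M_1|\geq 32$ characters with degree strictly between $b_1/4$ and $b_1$, and then sum the resulting lower bounds $\chi_\mu(1)^2>b_1^2/16$ to exceed $2b_1^2$. Your closing remarks on the threshold $32$ and on injectivity are accurate but not needed for the argument itself.
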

\begin{proof}
  Let
  \[
  \s T:=\left\{\chi\in\irr(\Sy_n)\,\Big|\,
  \frac{b(\Sy_n)}{4}<\chi(1)<b(\Sy_n)\right\}.
  \]
  By Proposition~\ref{proposition-cardinalityofM} (1) we have $|\s T|\geq
  |\s M_1|\geq 32$.  Thus,
  \[
  \sum_{\twolineindex{\chi\in\irr(\Sy_n)}{\chi(1)\neq b(\Sy_n)}}\chi(1)^2\geq
  \sum_{\chi\in \s T}\chi(1)^2>|\s T|\cdot \left(\frac{b(\Sy_n)}{4}\right)^2
    \geq   2b(\Sy_n)^2,
  \] as desired.
\end{proof}

The case where $|\s M_1|$ is small is handled by a different
technique. From now on, for characters $\chi_1,\chi_2$ of a group
$G$ we write $[\chi_1,\chi_2]$ to denote their inner product.
\begin{prop}\label{proposition-Theorem2}
  Let $\lambda\in\s M_1$ and let $\chi:=\chi_\la$. If $n\geq 50$ and $|\s
  M_1|\leq 31$, then
  \[
  \sum_{\twolineindex{[\varphi,(\chi\hspace{-1pt}\downarrow_{S_{n-1}})^{\Sy_n}]
      \neq 0}{\varphi(1)<b(\Sy_n)}}
  \hspace{-15pt}\varphi^2(1)>2b(\Sy_n)^2.
  \]
In particular, Theorem~\ref{theorem-main-2} holds in this case.
\end{prop}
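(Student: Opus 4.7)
The plan is to analyze the irreducible constituents of $\varphi := (\chi_\lambda\downarrow_{\Sy_{n-1}})^{\Sy_n}$ via the branching rule and then lower-bound the sum of squares of the non-maximal degrees by Cauchy--Schwarz. Applying the branching rule twice, one writes $\chi_\lambda\downarrow_{\Sy_{n-1}} = \sum_{c \in R(\lambda)} \chi_{\lambda - c}$ and then induces back to $\Sy_n$ to obtain
\[
\varphi \;=\; |R(\lambda)|\,\chi_\lambda \;+\; \sum_{\nu \in B(\lambda)} \chi_\nu,
\]
where $B(\lambda)$ is the set of partitions of $n$ obtained from $\lambda$ by removing a corner and then adding a different addable cell. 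The coefficient $|R(\lambda)|$ in front of $\chi_\lambda$ arises from the diagonal pairs $(c,c)$; I would check that each $\chi_\nu$ with $\nu \in B(\lambda)$ appears with multiplicity exactly one, since the pair $(c,c')$ with $c \in R(\lambda)$, $c' \in A(\lambda - c)$, $c \ne c'$, is uniquely recovered from $\nu$.

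Taking degrees and using $\varphi(1) = n\,\chi_\lambda(1) = n\,b(\Sy_n)$ yields
\[
\sum_{\nu \in B(\lambda)} \chi_\nu(1) \;=\; (n - |R(\lambda)|)\,b(\Sy_n).
\]
Because $\lambda \notin B(\lambda)$, the hypothesis $|\s M_1| \le 31$ forces at most $30$ of the $\nu \in B(\lambda)$ to lie in $\s M_1$; let $k$ denote this count. Subtracting the contribution of those top-degree partitions gives
\[
\sum_{\substack{\nu \in B(\lambda) \\ \chi_\nu(1) < b(\Sy_n)}} \chi_\nu(1) \;=\; (n - |R(\lambda)| - k)\,b(\Sy_n).
\]

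Next, I would apply Cauchy--Schwarz in the form $\sum x_i^2 \ge (\sum x_i)^2/N$, where $N = |B(\lambda)|-k$ is the number of surviving terms, to obtain
\[
\sum_{\substack{\nu \in B(\lambda) \\ \chi_\nu(1) < b(\Sy_n)}} \chi_\nu(1)^2 \;\ge\; \frac{(n - |R(\lambda)| - k)^2}{|B(\lambda)| - k}\,b(\Sy_n)^2.
\]
I would then invoke the standard estimates recalled in the preliminaries, namely $|R(\lambda)| \le \tfrac{-1 + \sqrt{1+8n}}{2}$ and $|B(\lambda)| \le |R(\lambda)|\cdot\tfrac{-1 + \sqrt{8n-7}}{2}$ (the latter by summing $|A(\lambda - c)| - 1$ over $c \in R(\lambda)$). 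Thus it suffices to verify, for all $n \ge 50$ and all admissible values $0 \le k \le 30$, the numerical inequality $(n - |R(\lambda)| - k)^2 > 2\,(|B(\lambda)| - k)$.

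I expect the principal difficulty to lie precisely in this last numerical verification: near the threshold $n = 50$, the simultaneous worst case (with $|R(\lambda)|$ and $k$ both near their maxima) leaves little slack, so one must exploit the dependency $|B(\lambda)| \le |R(\lambda)|\cdot\tfrac{-1 + \sqrt{8n-7}}{2}$ rather than treating $|R(\lambda)|$ and $|B(\lambda)|$ as independent worst-case parameters. With that correlation in hand, the inequality reduces to a quadratic check in the two parameters and can be confirmed uniformly for $n \ge 50$.
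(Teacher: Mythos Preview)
Your approach is essentially the paper's: expand $(\chi\!\downarrow_{\Sy_{n-1}})^{\Sy_n}$ via the branching rule, subtract the at most $|R(\lambda)|+30$ constituents of degree $b(\Sy_n)$ from the total degree $n\,b(\Sy_n)$, and apply Cauchy--Schwarz over the remaining terms. Your Cauchy--Schwarz denominator $|B(\lambda)|-k$ is sharper than the paper's blanket bound $2n$, and this refinement is not idle: at the threshold $n=50$ the paper's cruder estimate gives only $(n-\sqrt{2n}-30)^2/(2n)=1$, short of the required $2$, whereas your finer count (e.g.\ $|R(\lambda)|\le 9$, $|B(\lambda)|\le 9\cdot 9=81$, yielding $11^2>2\cdot 51$) does go through.
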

\begin{proof}
  By the branching rule we have
  \[
  (\chi\hspace{-3pt}\downarrow_{\Sy_{n-1}})^{\Sy_n}=
  |R(\la)|\cdot \chi+\sum_{i\neq j}\chi_{\la_{i\to j}},
  \]
  where we recall that $R(\la)$ is the set of nodes that can be
  removed from $Y_\lambda$ to obtain another Young diagram of size
  $n-1$, and $\la_{i\to j}$ denotes the partition obtained from
  $\lambda$ by moving the last node from row $i$ to the end of the row
  $j$.

  We also recall that $|R(\lambda)|\leq \frac{-1+\sqrt{1+8n}}{2}$ and
  if $\mu$ is a partition of $n-1$ then
  $|A(\mu)|<\frac{1+\sqrt{8n-7}}{2}$. Therefore the sum on the right
  hand side has at most
  \[
  \frac{-1+\sqrt{1+8n}}{2}\cdot \frac{1+\sqrt{8n-7}}{2} <2n
  \]
  characters.  Furthermore, $\chi$ appears at most
  $\frac{-1+\sqrt{1+8n}}{2}<\sqrt{2n}$ times, while there are at most
  $30$ other characters in this sum with degree $b(\Sy_n)$. Therefore,
  \[
  \sum_{\twolineindex{[\varphi,(\chi\hspace{-1pt}
      \downarrow_{S_{n-1}})^{\Sy_n}]\neq 0}{\varphi(1)<b(\Sy_n)}}
  \hspace{-10pt}\varphi(1)>(n-\sqrt{2n}-30)b(\Sy_n).
  \]
  Using the Cauchy-Schwarz inequality, we deduce that
  \[
  \sum_{\twolineindex{[\varphi,(\chi\hspace{-1pt}
      \downarrow_{S_{n-1}})^{\Sy_n}]\neq 0}{\varphi(1)<b(\Sy_n)}}
  \hspace{-10pt}\varphi^2(1)>\left(\frac{1}{\sqrt{2n}}
    (n-\sqrt{2n}-30)\right)^2\cdot b(\Sy_n)^2.
  \]
  It remains to check that
  \[
  \frac{1}{\sqrt{2n}}(n-\sqrt{2n}-30)\geq 1
  \]
but this is clear as $n\geq 50$.
\end{proof}

We are now ready to finish the proof of Theorem~\ref{theorem-main-2}.

\begin{proof}[Proof of Theorem~\ref{theorem-main-2}]
  In light of Corollary~\ref{corollary-Theorem2} and
  Proposition~\ref{proposition-Theorem2}, we only need to prove the
  theorem for $7\leq n\leq 49$. We have done that by computations in
  \cite{GAP} and the codes are available upon request.

  For each $n\leq 75$, partition corresponding to a character of
  $\Sy_n$ of the largest degree is available in~\cite{McKay}. Let $Y$
  be the Young diagram corresponding to this partition. We consider
  all possible Young diagrams obtained from $Y$ by moving one node
  from one row to another. For all those Young diagrams the degrees of
  the corresponding irreducible characters will be determined. If the
  degree of such a character coincides with the largest character
  degree of $\Sy_n$, then it will be excluded. We finally check that
  the sum of the squares of the remaining degrees is greater than
  $2b(\Sy_n)^2$, as desired.
\end{proof}


We now move on to a proof of Theorem~\ref{theorem-main-1}. First we
handle the case where $\Al_n$ and $\Sy_n$ have the same largest
character degree.
\begin{proposition}\label{proposition-theorem1}
  If $b(\Al_n)=b(\Sy_n)$ then Theorem~\ref{theorem-main-1} holds.
\end{proposition}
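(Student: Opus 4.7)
The plan is to deduce this proposition directly from Theorem~\ref{theorem-main-2} via a character-theoretic bookkeeping. First I dispose of the small cases $n\in\{5,6\}$: a direct inspection of the character tables shows $b(\Al_n)<b(\Sy_n)$ in those two cases, so the hypothesis of the proposition fails and there is nothing to prove. From now on I assume $n\geq 7$ and set $b:=b(\Sy_n)=b(\Al_n)$, so that Theorem~\ref{theorem-main-2} is applicable.

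The core step converts the sum of squares over $\irr(\Al_n)$ into a sum of squares over $\irr(\Sy_n)$. I partition the partitions of $n$ into non-self-conjugate conjugate pairs $\{\lambda,\overline\lambda\}$ and self-conjugate singletons $\lambda=\overline\lambda$. By the description of $\irr(\Al_n)$ recalled in Section~2, each non-self-conjugate pair with $\chi_\lambda(1)<b$ produces one $\Al_n$-irreducible of degree $\chi_\lambda(1)$, while each self-conjugate $\lambda$ produces two $\Al_n$-irreducibles of degree $\chi_\lambda(1)/2$. The crucial point is that a self-conjugate $\lambda$ with $\chi_\lambda(1)=b$ still contributes to the left-hand sum, since its two $\Al_n$-constituents have degree $b/2<b$. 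Assembling these contributions yields
\[
\sum_{\substack{\psi\in\irr(\Al_n)\\ \psi(1)<b}}\psi(1)^2
\;=\;\tfrac12\sum_{\substack{\lambda\neq\overline\lambda\\ \chi_\lambda(1)<b}}\chi_\lambda(1)^2
\;+\;\tfrac12\sum_{\lambda=\overline\lambda}\chi_\lambda(1)^2.
\]

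To finish, I invoke Theorem~\ref{theorem-main-2}. Since $\sum_{\lambda=\overline\lambda}\chi_\lambda(1)^2\geq\sum_{\lambda=\overline\lambda,\,\chi_\lambda(1)<b}\chi_\lambda(1)^2$, the right-hand side of the identity is at least $\tfrac12\sum_{\chi_\lambda(1)<b}\chi_\lambda(1)^2$, which by Theorem~\ref{theorem-main-2} strictly exceeds $\tfrac12\cdot 2b^2=b^2$, as required. There is essentially no obstacle here; the only subtle point is that the factor $\tfrac12$ coming from the pair-restriction $\chi_\lambda\hspace{-3pt}\downarrow_{\Al_n}=\chi_{\overline\lambda}\hspace{-3pt}\downarrow_{\Al_n}$ is precisely balanced by the factor $2$ on the right-hand side of Theorem~\ref{theorem-main-2}, which is exactly why the symmetric-group statement was formulated with that factor of $2$.
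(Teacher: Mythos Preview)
Your argument is correct and follows essentially the same route as the paper's proof: use the restriction behavior $\chi\hspace{-3pt}\downarrow_{\Al_n}$ to obtain the inequality $\sum_{\psi(1)<b}\psi(1)^2\geq\tfrac12\sum_{\chi(1)<b}\chi(1)^2$, and then apply Theorem~\ref{theorem-main-2}. Your version is slightly more explicit about the bookkeeping (writing an equality rather than just the inequality) and about the small cases $n\in\{5,6\}$, but the content is the same.
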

\begin{proof}
  Recall that the restriction of each irreducible character of $\Sy_n$
  to $\Al_n$ is either irreducible or a sum of two irreducible
  characters of equal degree. Therefore,
  \[
  \sum_{\twolineindex{\psi\in\irr(\Al_n)}{\psi(1)<b(\Al_n)}}
  \hspace{-10pt}\psi(1)^2\geq\frac{1}{2}
  \sum_{\twolineindex{\chi\in\irr(\Sy_n)}{\chi(1)<b(\Sy_n)}}
  \hspace{-10pt}\chi(1)^2.
  \]
  Using Theorem~\ref{theorem-main-2}, we obtain
  \[
  \sum_{\twolineindex{\psi\in\irr(\Al_n)}{\psi(1)< b(\Al_n)}}
  \hspace{-10pt}\psi(1)^2>b(\Sy_n)^2=b(\Al_n)^2,
  \]
  as desired.
\end{proof}


The proof of Theorem~\ref{theorem-main-1} in the case
$b(\Al_n)<b(\Sy_n)$ turns out to be more complicated. We will
explain this in the rest of this section.

Let $\lambda$ be the partition corresponding to a character of the
largest degree of $\Sy_n$. Then $\lambda$ is self-conjugate as
$b(\Al_n)<b(\Sy_n)$. Lemma~\ref{lemma-d(lambda)=2} guarantees that
$d(\lambda)=2$ and it follows that $\lambda_{up}$ and $\lambda_{dn}$
are \emph{not} self-conjugate. In particular,
$\chi_{\lambda_{up}}(1)$ and $\chi_{\lambda_{dn}}(1)$ are both at
most $b(\Al_n)$. Using Lemma~\ref{proposition-inequality}, we deduce
that
\[
\frac{b(\Sy_n)^2}{b(\Al_n)^2}=\frac{\chi_\lambda(1)^2}{b(\Al_n)^2}
\leq\frac{\chi_\lambda(1)^2}{\chi_{\lambda_{up}}(1)\chi_{\lambda_{dn}}(1)}<4,
\]
which in turns implies that $b(\Sy_n)/2 < b(\Al_n)$. In summary, we
have
\[\frac{b(\Sy_n)}{2} < b(\Al_n) < b(\Sy_n).\]

If there are two irreducible characters of $\Sy_n$ of the largest
degree, then the associated partitions are both self-conjugate and
so there are four irreducible characters of $\Al_n$ of degree
$b(\Sy_n)/2$, and we are done. So from now on we assume that there
is only one irreducible character of degree $b(\Sy_n)$ of $\Sy_n$.
In other words, $|\s M_1|=1$.  

If there is $\mu\in \s P$ such that $b(\Al_n)<\chi_\mu(1)<b(\Sy_n)$
then clearly $\mu$ must be self-conjugate. In this case $\Al_n$ has
two irreducible characters (lying under $\chi_\lambda$) of degree
$b(\Sy_n)/2$ and two irreducible characters (lying under $\chi_\mu$)
of degree at least $b(\Al_n)/2$, and we are done again. So we assume
furthermore that $b(\Al_n)$ is the second largest character degree
of $\Sy_n$, that is, $b(\Al_n)=b_2$.
\begin{proposition}\label{proposition-Theorem1-case2}
  Assume that there is precisely one irreducible character of $\Sy_n$
  of degree $b(\Sy_n)$ and $b(\Al_n)$ is the second largest character
  degree of $\Sy_n$. If $|\s M_2|\geq 20$, then
  Theorem~\ref{theorem-main-1} holds.
\end{proposition}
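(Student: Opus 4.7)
The plan is to exhibit enough irreducible characters of $\Al_n$ of degree strictly less than $b(\Al_n)$ by combining two disjoint families: those arising from the unique partition $\lambda\in\s M_1$, and those supplied by Proposition~\ref{proposition-cardinalityofM}(2). Since $|\s M_1|=1$ and $b(\Al_n)<b(\Sy_n)$, the partition $\lambda$ must be self-conjugate, so $\chi_\lambda\downarrow_{\Al_n}$ splits into two distinct irreducible characters of $\Al_n$ of degree $b(\Sy_n)/2$. Because $b(\Sy_n)/2<b(\Al_n)$ was established in the discussion preceding this proposition, both of these lie strictly below $b(\Al_n)$ and contribute exactly $b(\Sy_n)^2/2$ to the sum of squared degrees.

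For the second family, I would apply Proposition~\ref{proposition-cardinalityofM}(2) (available thanks to $|\s M_1|=1$) to obtain the set
\[
X:=\left\{\mu\in\s P \,\Big|\, \frac{b_2}{4}<\chi_\mu(1)<b_2\right\},
\]
of cardinality at least $|\s M_2|-4\geq 16$. Since conjugation of partitions preserves hook lengths, $X$ is closed under $\mu\mapsto\overline{\mu}$. Splitting $X$ into self-conjugate members and non-self-conjugate pairs, the accounting runs as follows: a non-self-conjugate pair $\{\mu,\overline{\mu}\}\subseteq X$ contributes a single $\Al_n$-irreducible of degree $\chi_\mu(1)$, hence $\chi_\mu(1)^2$ to the sum, while a self-conjugate $\mu\in X$ contributes two distinct $\Al_n$-irreducibles of degree $\chi_\mu(1)/2$, hence $\chi_\mu(1)^2/2$ to the sum. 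Either way this amounts to exactly $\chi_\mu(1)^2/2$ per partition in $X$, and all the corresponding $\Al_n$-degrees lie strictly below $b(\Al_n)=b_2$ (since $\chi_\mu(1)<b_2$ and $\chi_\mu(1)/2<b_2/2<b_2$).

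Combining the two contributions, which are genuinely disjoint because $\lambda\notin X$ and because distinct $\Sy_n$-partitions (up to conjugation) yield distinct $\Al_n$-irreducibles, the target sum is bounded below by
\[
\frac{b(\Sy_n)^2}{2} + \sum_{\mu\in X}\frac{\chi_\mu(1)^2}{2} > \frac{b(\Sy_n)^2}{2} + 16\cdot\frac{b_2^2}{32} > \frac{b_2^2}{2}+\frac{b_2^2}{2} = b(\Al_n)^2,
\]
where the first strict inequality uses $\chi_\mu(1)>b_2/4$ together with $|X|\geq 16$, and the second uses $b(\Sy_n)>b_2$. The main point requiring care is the per-partition bookkeeping in $X$: although self-conjugate and non-self-conjugate members produce different numbers of $\Al_n$-characters at different degrees, both happen to yield exactly $\chi_\mu(1)^2/2$ per partition, and this coincidence is precisely what makes the threshold $|\s M_2|\geq 20$ suffice with the two halves $b_2^2/2+b_2^2/2$ closing the inequality.
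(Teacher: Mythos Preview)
Your proof is correct and follows essentially the same approach as the paper's: both combine the contribution $b(\Sy_n)^2/2$ from the two $\Al_n$-constituents under the unique $\lambda\in\s M_1$ with the contribution from the $\geq 16$ partitions supplied by Proposition~\ref{proposition-cardinalityofM}(2), bounding the latter below by $\tfrac{1}{2}(b_2/4)^2$ per partition. Your version is simply more explicit about the self-conjugate versus non-self-conjugate bookkeeping that justifies the factor $\tfrac{1}{2}$ per partition in $X$, and about the disjointness of the two families.
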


\begin{proof}
  Proposition~\ref{proposition-cardinalityofM} (2) and the
  hypothesis $|\s M_2|\geq 20$ imply that
  \[
  \Big|\Big\{\nu\in\s P\,\Big|\,
  \frac{b(\Al_n)}{4}< \chi_\nu(1)<
  b(\Al_n)\Big\}\Big|\geq 16.
  \]
  Thus the sum of the squares of the degrees of irreducible characters
  of $\Al_n$ lying under these characters $\nu$ is at least
  \[16\cdot
  \frac{1}{2}\left(\frac{b(\Al_n)}{4}\right)^2=\frac{b(\Al_n)^2}{2}.\]
  On the other hand, the sum of the squares of the degrees of the two
  characters of $\Al_n$ lying under $\chi_\lambda$ is $b(\Sy_n)^2/2$,
  which is larger than $b(\Al_n)^2/2$. So we conclude that
  \[
  \sum_{\twolineindex{\psi\in\irr(\Al_n)}{\psi(1)< b(\Al_n)}}
  \hspace{-10pt}\psi(1)^2>b(\Al_n)^2,
  \]
  as the theorem claimed.
\end{proof}


\begin{proposition}\label{proposition-Theorem1-case21}
  Assume that there is precisely one irreducible character of $\Sy_n$
  of degree $b(\Sy_n)$ and $b(\Al_n)$ is the second largest character
  degree of $\Sy_n$. Let $\mu\in \s M_2$ and let $\chi:=\chi_\mu$. If
  $n\geq 43$ and $|\s M_2|\leq 19$, then
  \[
  \sum_{\twolineindex{[\varphi,(\chi\hspace{-1pt}
      \downarrow_{\Sy_{n-1}})^{\Sy_n}]\neq 0}{\varphi(1)<b(\Al_n)}}
  \hspace{-10pt}\varphi^2(1)>2b(\Al_n)^2.
  \]
  In particular, Theorem~\ref{theorem-main-1} holds in this case.
\end{proposition}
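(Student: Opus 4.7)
The plan is to mirror the approach of Proposition~\ref{proposition-Theorem2}, but with $b(\Al_n)$ playing the role of $b(\Sy_n)$ as the cutoff. I would start by applying the branching rule to write
\[
(\chi\downarrow_{\Sy_{n-1}})^{\Sy_n}=|R(\mu)|\cdot\chi+\sum_{i\neq j}\chi_{\mu_{i\to j}},
\]
whose total degree is $n\cdot b(\Al_n)$ and whose number of distinct irreducible constituents is smaller than $2n$ by the estimate on $|R(\cdot)|\cdot|A(\cdot)|$ recalled in Section~2. Any constituent of degree at least $b(\Al_n)$ lies in $\s M_1\cup\s M_2$, and by hypothesis $\s M_1=\{\chi_\lambda\}$ and $|\s M_2|\leq 19$.

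Next I would bound the total contribution of these high-degree constituents to the degree identity. Any constituent $\chi_\nu$ with $\nu\neq\mu$ has multiplicity at most $1$, because the common sub-Young-diagram of $Y_\mu$ and $Y_\nu$ (if one exists) is unique. Hence $\chi_\lambda$ contributes at most $b(\Sy_n)<2\,b(\Al_n)$, using the key inequality $b(\Sy_n)<2\,b(\Al_n)$ established just before the proposition; the character $\chi$ itself contributes $|R(\mu)|\,b(\Al_n)<\sqrt{2n}\,b(\Al_n)$; and the remaining at most $18$ characters of $\s M_2$ together contribute at most $18\,b(\Al_n)$. Subtracting from $n\,b(\Al_n)$ gives
\[
\sum_{\twolineindex{[\varphi,(\chi\downarrow_{\Sy_{n-1}})^{\Sy_n}]\neq 0}{\varphi(1)<b(\Al_n)}}\hspace{-18pt}\varphi(1)\;>\;(n-\sqrt{2n}-20)\,b(\Al_n),
\]
noting that for $\varphi\neq\chi$ the multiplicity is at most $1$, so the distinction with/without multiplicity is irrelevant outside the excluded term $\chi$. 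Cauchy--Schwarz applied to these fewer than $2n$ terms yields
\[
\sum_\varphi \varphi(1)^2\;>\;\left(\frac{n-\sqrt{2n}-20}{\sqrt{2n}}\right)^2 b(\Al_n)^2.
\]

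The last step is purely numerical: one needs $n-\sqrt{2n}-20\ge 2\sqrt{n}$ for all $n\ge 43$, which at $n=43$ reduces to $23-\sqrt{86}\ge 2\sqrt{43}$, a bound that holds with a narrow margin (about $0.6$) and widens with $n$. To deduce Theorem~\ref{theorem-main-1}, each $\varphi$ in the sum restricts to $\Al_n$ as either one irreducible of degree $\varphi(1)$ (if $\varphi\neq\overline\varphi$) or two irreducibles of degree $\varphi(1)/2$ (if $\varphi=\overline\varphi$), all of degree strictly less than $b(\Al_n)$; since distinct $\varphi,\varphi'$ in our set can share a restriction only when $\varphi'=\overline\varphi$, an elementary bookkeeping shows that $\sum_{\psi}\psi(1)^2\ge\tfrac12\sum_\varphi\varphi(1)^2>b(\Al_n)^2$. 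The main obstacle is the tightness of the threshold $n\ge 43$: one must exploit the sharp inequality $b(\Sy_n)<2\,b(\Al_n)$ rather than any cruder estimate, and even so the margin at $n=43$ is quite slim.
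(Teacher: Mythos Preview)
Your proposal is correct and follows essentially the same route as the paper: the branching-rule decomposition, the count of high-degree constituents via $|\s M_1|=1$ and $|\s M_2|\le 19$, the bound $b(\Sy_n)<2\,b(\Al_n)$, the Cauchy--Schwarz step over fewer than $2n$ terms, and the final numerical inequality $(n-\sqrt{2n}-20)/\sqrt{2n}>\sqrt{2}$ for $n\ge 43$ all match the paper's argument, as does the passage to $\Al_n$ via halving. You in fact supply a small detail the paper leaves implicit, namely that each $\chi_\nu$ with $\nu\neq\mu$ occurs with multiplicity at most one in the induced character.
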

\begin{proof}
  The proof goes along the same lines as that of
  Proposition~\ref{proposition-Theorem2} and so we will skip some
  details. First, by the branching rule,
  \[
  (\chi\hspace{-3pt}\downarrow_{\Sy_{n-1}})^{\Sy_n}=
  |R(\mu)|\cdot \chi+\sum_{i\neq j}\chi_{\mu_{i\to j}},
  \]
  where $R(\mu)$ is the set of nodes that can be removed from
  $Y_\mu$ to obtain another Young diagram of size $n-1$, and
  $\mu_{i\to j}$ denotes the partition obtained from $\mu$ by
  moving the last node from row $i$ to the end of the row $j$.

  In  the  sum  on  the  right  hand side,  there  are  at  most  $18$
  irreducible characters  (other than $\chi$)  with degree $b(\Al_n)$,
  and  at most one  irreducible character  with degree  $b(\Sy_n)$. We
  recall  that  $|R(\mu)|\leq  \frac{-1+\sqrt{1+8n}}{2}<\sqrt{2n}$
  and $b(\Sy_n)<2b(\Al_n)$. Therefore,
  \[
  \sum_{\twolineindex{[\varphi,(\chi\hspace{-1pt}
      \downarrow_{S_{n-1}})^{\Sy_n}]\neq 0}{\varphi(1)<b(\Al_n)}}
  \hspace{-10pt}\varphi(1)> (n-\sqrt{2n}-20)b(\Al_n).
  \]
  As the sum on the right hand side has at most $2n$ terms, the Cauchy-Schwarz
  inequality then implies that
  \[
  \sum_{\twolineindex{[\varphi,(\chi\hspace{-1pt}
      \downarrow_{S_{n-1}})^{\Sy_n}]\neq 0}{\varphi(1)<b(\Al_n)}}
  \hspace{-10pt}\varphi^2(1)> \left(\frac{1}{\sqrt{2n}}
    (n-\sqrt{2n}-20)\right)^2\cdot b(\Al_n)^2.
  \]
  Now the inequality in the proposition follows as
  $\frac{1}{\sqrt{2n}}(n-\sqrt{2n}-20)>\sqrt{2}$ when $n\geq 43$.

  To see that Theorem~\ref{theorem-main-1} holds under the given
  hypothesis, we just observe that
  \[
  \sum_{\twolineindex{\psi\in\irr(\Al_n)}{\psi(1)< b(\Al_n)}}\hspace{-10pt}\psi(1)^2>
  \frac{1}{2}\hspace{-15pt}\sum_{\twolineindex{[\varphi,(\chi\hspace{-1pt}
      \downarrow_{\Sy_{n-1}})^{\Sy_n}]\neq 0}{\varphi(1)<b(\Al_n)}}
  \hspace{-15pt}\varphi^2(1)>b(\Al_n)^2.
  \]
\end{proof}


Finally we can prove Theorem~\ref{theorem-main-1} in the case
$b(\Al_n)<b(\Sy_n)$.

\begin{proposition}\label{proposition-theorem11} If $b(\Al_n)<b(\Sy_n)$ then
Theorem~\ref{theorem-main-1} holds.
\end{proposition}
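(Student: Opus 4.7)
The plan is to combine the case reduction carried out immediately before the statement with Propositions~\ref{proposition-Theorem1-case2} and~\ref{proposition-Theorem1-case21}, and then dispose of the finitely many remaining $n$ by direct computation.

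The preceding discussion has already handled two subcases. When $|\s M_1|\ge 2$, every partition yielding degree $b(\Sy_n)$ is forced to be self-conjugate (otherwise $\Al_n$ would have an irreducible of degree $b(\Sy_n)>b(\Al_n)$), so $\Al_n$ has at least four characters of degree $b(\Sy_n)/2$, each strictly below $b(\Al_n)$ since $b(\Sy_n)<2b(\Al_n)$; the sum of their squared degrees already equals $b(\Sy_n)^2>b(\Al_n)^2$. When $|\s M_1|=1$ and some $\mu\in\s P$ satisfies $b(\Al_n)<\chi_\mu(1)<b(\Sy_n)$, then $\mu$ is self-conjugate, so $\Al_n$ has two characters of degree $b(\Sy_n)/2$ lying under $\chi_\la$ and two of degree $\chi_\mu(1)/2$ lying under $\chi_\mu$, all smaller than $b(\Al_n)$; their squared degrees sum to $b(\Sy_n)^2/2+\chi_\mu(1)^2/2>b(\Al_n)^2$. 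Thus one may assume $|\s M_1|=1$ and $b(\Al_n)=b_2$.

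Under this assumption the two remaining propositions split into complementary regimes: Proposition~\ref{proposition-Theorem1-case2} handles $|\s M_2|\ge 20$, and Proposition~\ref{proposition-Theorem1-case21} handles $|\s M_2|\le 19$ for $n\ge 43$. The only values of $n$ not yet covered by these results are therefore $5\le n\le 42$. For each such $n$ with $b(\Al_n)<b(\Sy_n)$, one finishes by a direct computation along the same lines as the one used to close out the proof of Theorem~\ref{theorem-main-2}: read off the partition(s) realising $b(\Sy_n)$ from~\cite{McKay}, enumerate all partitions obtained by moving a single node of the corresponding Young diagram, compute the resulting degrees via the hook-length formula, translate each into $\Al_n$-degree data (halving the degree when the partition is self-conjugate and introducing two characters), and check numerically that the sum of the squared degrees strictly below $b(\Al_n)$ exceeds $b(\Al_n)^2$.

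The main obstacle is organizational rather than conceptual: in the small-$n$ range one must correctly identify $b(\Al_n)$ itself, which depends on whether the top-degree partition(s) of $\Sy_n$ and their neighbours in $\G$ are self-conjugate. This information is directly extractable from the published tables, so the verification is a routine \cite{GAP} computation that fits the exact template already used at the end of the proof of Theorem~\ref{theorem-main-2}.
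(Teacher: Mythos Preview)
Your proposal is correct and follows essentially the same route as the paper: reduce via the preceding discussion to the case $|\s M_1|=1$ and $b(\Al_n)=b_2$, invoke Propositions~\ref{proposition-Theorem1-case2} and~\ref{proposition-Theorem1-case21} to cover all $n\ge 43$, and finish the finitely many remaining $n$ by computation based on the McKay tables. The only cosmetic difference is that the paper organises the small-$n$ check into two subcases, one of which (when $b(\Al_n)\notin\{\chi_{\lambda_{up}}(1),\chi_{\lambda_{dn}}(1)\}$) is dispatched not by computation but by the inequality $\chi_{\lambda_{up}}(1)^2+\chi_{\lambda_{dn}}(1)^2>\chi_\lambda(1)^2/2$ from Lemma~\ref{proposition-inequality} together with the two constituents of $\chi_\lambda\hspace{-3pt}\downarrow_{\Al_n}$; your more uniform brute-force description would of course also work.
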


\begin{proof} As discussed at the beginning of this section, it
suffices to assume that there is precisely one irreducible character
of $\Sy_n$ of degree $b(\Sy_n)$ and $b(\Al_n)$ is the second largest
character degree of $\Sy_n$. Now the proposition follows from
Propositions~\ref{proposition-Theorem1-case2}
and~\ref{proposition-Theorem1-case21} when $n\geq 43$.

Let us now describe how we verify the theorem for $n<43$. As pointed
out earlier the partition $\lambda$ corresponding to the largest
degree in \cite{McKay} is self-conjugate. Denote the Young diagram
corresponding to this partition by $Y$, so $Y$ is symmetric. Then as
before we consider all Young diagrams obtained from $Y$ by moving
one node from one row to another. Note that all these Young diagrams
are not symmetric anymore and $Y_{up}$ and $Y_{dn}$ (the Young
diagrams of $\lambda_{up}$ and $\lambda_{dn}$) are among these
diagrams. For such a Young diagram we compute by \cite{GAP} the
associated character degree. There are two cases:

1) $b(\Al_n)$ is not $\chi_{\lambda_{up}}(1)$ and
$\chi_{\lambda_{dn}}(1)$. We have
\[
\chi_{\lambda_{up}}(1)^2+\chi_{\lambda_{dn}}(1)^2\geq
2\chi_{\lambda_{up}}(1)\chi_{\lambda_{dn}}(1)>\frac{\chi_\lambda(1)^2}{2}=\frac{b(\Sy_n)^2}{2}
\]
where the inequality in the middle comes from
Lemma~\ref{proposition-inequality}. Using two irreducible characters
lying under $\chi_\lambda$ as well, we obtain the desired
inequality.

2) $b(\Al_n)$ is either $\chi_{\lambda_{up}}(1)$ or
$\chi_{\lambda_{dn}}(1)$. In particular, the largest degree (among
the degrees we have computed) falls into either $Y_{up}$ or
$Y_{dn}$. Then we just check that the sum of the squares of all
other smaller degrees is bigger than the square of this largest
degree.
\end{proof}

Theorem~\ref{theorem-main-1} now is just a consequence of
Propositions~\ref{proposition-theorem1}
and~\ref{proposition-theorem11}.


\section{Theorem \ref{theorem-main-3}}\label{section5}

We will prove Theorem \ref{theorem-main-3} in this section. As the
main ideas are basically the same as those in
Sections~\ref{section3}, we will skip most of the details.

\begin{proof}[Proof of Theorem \ref{theorem-main-3}] Following the proofs of Corollary~\ref{corollary-Theorem2}
and Proposition~\ref{proposition-Theorem2}, we obtain
\[
\sum_{\twolineindex{\chi\in\irr(\Sy_n)}{\chi(1)< b(\Sy_n)}}
\hspace{-10pt}\chi(1)^2\geq\max\left\{\frac{|\s
M_1|}{16},\frac{(n-\sqrt{2n}-(|\s M_1|-1))^2}{2n}\right\}
b(\Sy_n)^2,
\]
which implies that
\[
\varepsilon(\Sy_n)\geq \max\left\{\frac{|\s
M_1|}{16},\frac{(n-\sqrt{2n}-(|\s M_1|-1))^2}{2n}\right\}.
\]
It now easily follows that $\varepsilon(\Sy_n)\rightarrow\infty$ as
$n\rightarrow\infty$.

To estimate $\varepsilon(\Al_n)$, we again consider two cases. If
$b(\Al_n)=b(\Sy_n)$ we would have
\[
\varepsilon(\Al_n)\geq \frac{1}{2}\varepsilon(\Sy_n)
\]
and therefore there is nothing more to prove.

So from now on we assume that $b(\Al_n)<b(\Sy_n)$. Let $x$ be the
number of irreducible characters of $\Sy_n$ of degree bigger than
$b(\Al_n)$. These characters produce $2x$ irreducible characters of
$\Al_n$ of degree at least $b(\Al_n)/2$ and therefore
\[
\sum_{\twolineindex{\chi\in\irr(\Al_n)}{\chi(1)< b(\Al_n)}}
\hspace{-10pt}\chi(1)^2\geq 2x\cdot
\left(\frac{b(\Al_n)}{2}\right)^2,
\]
which yield
\begin{equation}\label{equation1}\varepsilon(\Al_n)\geq \frac{x}{2}.
\end{equation}

Let $y$ be the multiplicity of the character degree $b(\Al_n)$ of
$\Sy_n$. Then we have
\[
  \left|\left\{\nu\in\s P\,\Big|\,
  \frac{b(\Al_n)}{4}< \chi_\nu(1)<
  b(\Al_n)\right\}\right|\geq y-4x.
  \]
Each $\nu$ in this set produces either one irreducible character of
$\Al_n$ of degree greater than $b(\Al_n)/4$ or two irreducible
characters of $\Al_n$ of degree greater than $b(\Al_n)/8$. Thus
\begin{equation}\label{equation2}\varepsilon(\Al_n)\geq \frac{y-4x}{32}.
\end{equation}

On the other hand, by following similar arguments as in the proof of
Proposition~\ref{proposition-Theorem1-case21}, we get
\begin{equation}\label{equation3}\varepsilon(\Al_n)\geq \frac{(n-\sqrt{2n}-2x-(y-1))^2}{2n}.
\end{equation}

Now combining Equations~\ref{equation1}, \ref{equation2},
and~\ref{equation3}, we have
\[
\varepsilon(\Al_n)\geq \max \left\{
\frac{x}{2},\frac{y-4x}{32},\frac{(n-\sqrt{2n}-2x-(y-1))^2}{2n}\right\}.
\]
From this it is clear that $\varepsilon(\Al_n)\rightarrow\infty$ as
$n\rightarrow\infty$ and the proof is complete.
\end{proof}

\section*{Acknowledgement} The authors are grateful to Attila Mar\'oti for
several helpful comments.

\end{document}